\definecolor{darkgreen}{rgb}{0,0.4,0}
\definecolor{BrickRed}{rgb}{0.65,0.08,0}
\newtheorem{thm}{THEOREM}
\newtheorem{rmk}{REMARK}
\newtheorem{lemma}[thm]{Lemma}
\numberwithin{equation}{section}
\def\N{\mathbb{N}}
\def\F{\mathbb{F}}
\def\ainFq*{a\in \F_q^*}
\def\rinN*{r \in \N^*}
\def\ninN*{n \in \N^*}
\def\SGA412{\text{SGA} 4 \sfrac{1}{2}}
	\renewcommand{\contentsname}%
	{TABLE OF CONTENTS}%
\begin{document}
\title{On Ramanujan-Fourier expansions} 


\author{David T. Nguyen}
\email{d.nguyen@queensu.ca}
\address{Dept. of Math. and Stats., Queens' University, Kingston}

\newcount\m \newcount\n
\def\hours{\n=\time \divide\n 60
	\m=-\n \multiply\m 60 \advance\m \time
	\twodigits\n:\twodigits\m}
\def\twodigits#1{\ifnum #1<10 0\fi \number#1}
\date{\today}

\maketitle

We cut to the chase. In this short note, we aim to do the following two things.
\begin{enumerate}
	\item We give, yet, another heuristic derivation for the general shifted divisor correlation leading order term, valid for any $k,\ell \ge 1$ and composite $h$:
	\begin{equation} \label{eq:main}
		\sum_{ n\le X}
		\tau_k(n) \tau_\ell(n+h)
		\sim
		C_{k, \ell}
		f_{k,\ell}(h)
		X
		\frac{(\log X)^{k+ \ell - 2}}{(k-1)! (\ell -1)!},
		\quad
		(X \to \infty),
	\end{equation}
	where $\tau_k(n)$ is the $k$-fold divisor function, with
	\begin{equation} \label{eq:C}
		C_{k, \ell}
		=
		\prod_p
		\left(
		1
		-
		\left(
		1 - 
		\left(1- \frac{1}{p}\right)^{k-1}
		\right)
		\left(
		1 - 
		\left(1- \frac{1}{p}\right)^{\ell-1}
		\right)
		\right),
	\end{equation}
	\begin{equation} \label{eq:f}
		f_{k, \ell}(h)
		=
		\prod_{p \mid h}
		\left(
		\frac{
			1
			+ \displaystyle\sum_{1\le j \le \nu_p(h)}
			(p^j - p^{j-1})
			\frac{A_{k,\ell}(p^j)}{p^{2j}}
			-
			p^{\nu_p(h)}
			\frac{A_{k,\ell}(p^{\nu_p(h)+1})}{p^{2(\nu_p(h)+1)}}
		}{1
			-
			\left(
			1 - 
			\left(1- \frac{1}{p}\right)^{k-1}
			\right)
			\left(
			1 - 
			\left(1- \frac{1}{p}\right)^{\ell-1}
			\right)}
		\right),
	\end{equation}
	\begin{equation} \label{eq:Akl}
		A_{k,\ell}(p^j)
		=
		\left(1-\frac{1}{p}\right)^{k + \ell -2}
		\sum_{ \alpha \ge j}
		\binom{k + \alpha -2}{k-2}
		p^{j - \alpha}
		\sum_{ \beta \ge j}
		\binom{\ell + \beta -2}{\ell-2}
		p^{j - \beta},
	\end{equation}
	and	$\nu_p(h)$ the highest power of $p$ dividing $h$.

	\item We prove that our global constant $C_{k, \ell}$ above in \eqref{eq:C} and the local Euler factor $f_{k,\ell}(h)$ in \eqref{eq:f} match exactly that of Ng and Thom's coming from heuristic of the circle method, for all $k, \ell \ge 1$ and any composite shift $h$. The proof of these is simple.
\end{enumerate}

Our derivation gives a new arithmetic interpretation to the leading constant $C_{k, \ell}
f_{k,\ell}(h)$ in \eqref{eq:main}, and, is, of course, not rigorous (and it may be impossible to make it rigorous, because, if it were, it would also give the twin prime conjecture for free). It can be used, however, to make some precise predictions. The method is based on the (not-yet-well-developed) theory of Ramanujan-Fourier expansions, in which the exponential phase is replaced by Ramanujan sums.

\subsection{Definition}
Given an arithmetic function $f(n)$, we say $f(n)$ has an RF expansion if, for each $n$, it can be written in the form
\begin{equation}
	f(n) = 
	\sum_{q=1}^\infty
	\hat{f}(q) c_q(n)
\end{equation}
for some coefficients $\hat{f}(q)$ that make the above series convergent. Here $c_q(n)$ is the Ramanujan sum.


\subsection{Lemmas}
We need the following two lemmas.
\begin{lemma}[Lucht (1995) \cite{Lucht1995}[Prop. 3, p. 40]]
	One has the following conditionally convergent RF expansion:
	\begin{equation} \label{eq:Lucht}
		\tau_k(n)
		=
		\sum_{q=1}^\infty
		\hat{\tau_k}(q)
		c_q(n),
	\end{equation}
	where
	\begin{equation} \label{eq:rf}
		\hat{\tau_k}(q)
		=
		\frac{(-1)^{k-1}}{(k-1)!}
		\frac{\log^{k-1}(q)}{q}
		\prod_{p\mid q}
		\left(1-\frac{1}{p}\right)^{k-1}
		\sum_{ \alpha \ge \nu_p(q)}
		\binom{k + \alpha -2}{k-2}
		p^{\nu_p(q) - \alpha}.
	\end{equation}
\end{lemma}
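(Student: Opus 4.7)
My plan is to derive the formula for $\hat{\tau_k}(q)$ from a Dirichlet series / Perron calculation, and then to establish the pointwise convergence of the Ramanujan--Fourier series by Abel summation and cancellation in the partial sums. I would start from the Gauss identity $c_q(n) = \sum_{d \mid \gcd(n,q)} d\, \mu(q/d)$ and the multiplicativity of $\tau_k$ to obtain
\[
\sum_{n \ge 1} \frac{\tau_k(n)\, c_q(n)}{n^s} = \zeta(s)^k \sum_{d \mid q} d^{1-s} \mu(q/d) \prod_{p \mid d} (1-p^{-s})^k F_{k, \nu_p(d)}(p^{-s}),
\]
where $F_{k, a}(x) := \sum_{b \ge 0} \binom{a+b+k-1}{k-1} x^b$. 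The sum over $d \mid q$ is a finite Euler product supported on primes dividing $q$ and is holomorphic at $s = 1$, so the order-$k$ pole comes entirely from $\zeta(s)^k$.

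Next I would identify $\hat{\tau_k}(q)$ as the coefficient produced by this residue. The Möbius-convolution over $d \mid q$ factors prime-by-prime into $(1-1/p)^k \bigl( F_{k, a}(1/p) - F_{k, a-1}(1/p) \bigr)$ with $a = \nu_p(q)$, and Pascal's rule $\binom{a+b+k-1}{k-1} - \binom{a+b+k-2}{k-1} = \binom{a+b+k-2}{k-2}$ collapses each such difference to $(1-1/p)^{k-1} \sum_{\alpha \ge \nu_p(q)} \binom{k+\alpha-2}{k-2} p^{\nu_p(q) - \alpha}$ after peeling off one factor of $(1-1/p)$. The prefactor $\log^{k-1}(q)/(k-1)!$ and the sign $(-1)^{k-1}$ then arise from the Laurent expansion of $\zeta(s)^k q^{-s}$ near $s = 1$, yielding exactly \eqref{eq:rf}.

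For the pointwise convergence of \eqref{eq:Lucht}, I would exchange the $q$-sum and the $d$-sum, writing
\[
\sum_{q \le Q} \hat{\tau_k}(q) c_q(n) = \sum_{d \mid n} d \sum_{m \le Q/d} \hat{\tau_k}(dm)\, \mu(m),
\]
and apply Abel summation together with partial-sum estimates of $\hat{\tau_k}$ twisted by $\mu$; after summation one recovers $\tau_k(n)$ via a Perron-style inversion of the Dirichlet series of step one.

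The main obstacle is this last step. Since $|\hat{\tau_k}(q)| \asymp (\log q)^{k-1}/q$, the series \eqref{eq:Lucht} is not absolutely convergent and pointwise convergence depends on subtle cancellation in the $\mu$- and $c_q$-sums. I expect most of the effort to go into producing a sufficiently uniform error term in the Selberg--Delange asymptotic of step one so that it can be fed into the Abel summation of step three, mirroring the technically deep portion of Lucht's argument in \cite{Lucht1995}.
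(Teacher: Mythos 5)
First, a point of comparison: the paper does not prove this lemma at all. It is imported verbatim from Lucht (1995) as an external input, and the paper even flags (in the Remark immediately following) a suspected sign error in Lucht's original statement. So there is no in-paper argument to measure your proposal against; it has to stand on its own, and as written it has a genuine gap at the central step.

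Your step one is correct: the Dirichlet series identity holds, the M\"obius sum over $d \mid q$ does factor prime-by-prime, and Pascal's rule does collapse $F_{k,a}-F_{k,a-1}$ to the sum $\sum_{\alpha \ge \nu_p(q)} \binom{k+\alpha-2}{k-2} p^{\nu_p(q)-\alpha}$; the peeled-off factor $\prod_{p \mid q}(1-1/p)$ is correctly absorbed if one normalizes by $\phi(q) = q\prod_{p\mid q}(1-1/p)$, recovering the local factors of \eqref{eq:rf}. The gap is in your claim that the prefactor $(-1)^{k-1}\log^{k-1}(q)/((k-1)!\,q)$ ``arises from the Laurent expansion of $\zeta(s)^k q^{-s}$ near $s=1$.'' No factor $q^{-s}$ occurs anywhere in the Dirichlet series you computed: your $d$-sum is a finite Euler product over $p \mid q$ whose value at $s=1$ is a constant carrying no power of $\log q$. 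What the order-$k$ pole of your $D_q(s)$ actually yields is the mean value $\tfrac1X \sum_{n \le X} \tau_k(n) c_q(n) \sim \tfrac{(\log X)^{k-1}}{(k-1)!} \prod_{p\mid q}(\cdots)$, which grows with $X$; hence for $k \ge 2$ the Carmichael-type limit $\lim_X \tfrac{1}{\phi(q)X}\sum_{n\le X}\tau_k(n)c_q(n)$ diverges and cannot define $\hat{\tau_k}(q)$. Manufacturing $(-1)^{k-1}(\log q)^{k-1}$ in place of $(\log X)^{k-1}$ is precisely the delicate content of Lucht's theorem --- it is the same swap the paper performs non-rigorously, in the reverse direction, right after \eqref{eq:941}, and it is the source of the sign ambiguity the paper's Remark discusses --- and your sketch supplies no mechanism for it. Your step three is the standard convergence route (for $k=2$ it reduces to Ramanujan's classical proof via $\sum_m \mu(m)/m = 0$ and $\sum_m \mu(m)\log m/m = -1$), but it presupposes the coefficient formula that step two has not actually derived.
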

\begin{rmk}
	It seems that \eqref{eq:rf} corrects a small sign error in the original stated result of Lucht (the $-1$ in the exponent in \cite{Lucht1995}[Prop. 3, p. 40] should not be there). We believe so because if we kept the sign from Lucht, it produces the wrong answer in \eqref{eq:main} when compared to \cite{NgThom2019}.
\end{rmk}

\begin{lemma}[Carmichael (1932) \cite{Carmichael1932}]
	We have the following orthogonality relation/correlation of two Ramanujan sums:
	\begin{equation} \label{eq:Carmichael}
		\sum_{n \le X}
		c_{q_1}(n)
		c_{q_2}(n+h)
		\sim
		\begin{cases}
			X c_q(h), & \text{ if } q_1=q_2=q,\\
			0, & \text{otherwise}.
		\end{cases}
	\end{equation}
\end{lemma}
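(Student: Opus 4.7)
My plan is to use the exponential representation
\[
c_q(n) = \sum_{\substack{1 \le a \le q \\ (a,q)=1}} e^{2\pi i a n/q},
\]
expand the product and swap the order of summation to obtain
\[
\sum_{n \le X} c_{q_1}(n)\, c_{q_2}(n+h)
= \sum_{\substack{1 \le a \le q_1 \\ (a,q_1)=1}} \sum_{\substack{1 \le b \le q_2 \\ (b,q_2)=1}} e^{2\pi i b h/q_2} \sum_{n \le X} e^{2\pi i n (a/q_1 + b/q_2)}.
\]
The inner geometric sum equals $X + O(1)$ when $a/q_1 + b/q_2 \in \Z$ and is bounded by $O(\|a/q_1 + b/q_2\|^{-1})$ otherwise, where $\|\cdot\|$ denotes distance to the nearest integer. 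The main term therefore comes entirely from the ``diagonal'' pairs $(a,b)$ satisfying $a q_2 + b q_1 \equiv 0 \pmod{q_1 q_2}$.

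Next I analyze this diagonal condition. Writing $d = \gcd(q_1,q_2)$, $q_1 = d m_1$, $q_2 = d m_2$ with $(m_1,m_2)=1$, the congruence becomes $a m_2 + b m_1 \equiv 0 \pmod{d m_1 m_2}$. Reducing modulo $m_1$ gives $m_1 \mid a m_2$, hence $m_1 \mid a$ since $(m_1,m_2)=1$; combined with $(a,q_1)=1$ this forces $m_1 = 1$. Symmetrically $m_2 = 1$, so a diagonal solution can exist only when $q_1 = q_2$. This already disposes of the off-diagonal case of \eqref{eq:Carmichael}, since the remaining sum is then purely an error term independent of $X$.

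When $q_1 = q_2 = q$ the diagonal condition becomes $b \equiv -a \pmod q$, which has a unique solution $b \in \{1,\ldots,q\}$ coprime to $q$ for each admissible $a$. The main term is then
\[
X \sum_{\substack{1 \le a \le q \\ (a,q)=1}} e^{-2\pi i a h/q} = X\, c_q(h),
\]
using that Ramanujan sums are real-valued and even. For the remaining off-diagonal pairs, $a/q_1 + b/q_2$ is a rational with denominator at most $q_1 q_2$, so $\|a/q_1 + b/q_2\| \ge 1/(q_1 q_2)$, and summing over the at most $\varphi(q_1)\varphi(q_2)$ admissible pairs produces a total error $O_{q_1,q_2}(1)$, uniformly in $X$ and $h$. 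This is $o(X)$ as $X \to \infty$, which finishes the proof. The only step I expect to require a moment's thought is the coprimality/congruence bookkeeping in the middle paragraph forcing $q_1=q_2$; the expansion and the error bound are standard.
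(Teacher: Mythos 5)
Your proof is correct. Note that the paper offers no proof of this lemma at all --- it is stated with a bare citation to Carmichael (1932), and the adjacent remark points to \cite{GMP2014} for a finer asymptotic --- so there is no in-paper argument to compare against; you are supplying the standard one. The expansion of $c_q(n)$ as a sum over primitive residues, the isolation of the diagonal $a/q_1+b/q_2\in\Z$, and the coprimality argument forcing $m_1=m_2=1$ (hence $q_1=q_2$) are all sound, and the identification of the main term as $X\sum_{(a,q)=1}e^{-2\pi i ah/q}=Xc_q(h)$ via the evenness of Ramanujan sums is exactly right. Your error bound $O_{q_1,q_2}(1)$, uniform in $h$, is in fact sharper than what the stated asymptotic requires (the ``$\sim 0$'' in the off-diagonal case can only sensibly mean $o(X)$, as you interpret it). The one thing worth flagging is that this pointwise-in-$(q_1,q_2)$ statement is the easy part; the genuinely non-rigorous step in the paper is the subsequent interchange of the $n$-sum with the two infinite $q$-sums in the derivation of the main asymptotic, where the $q$-dependence of your $O_{q_1,q_2}(1)$ error becomes fatal --- but that is outside the scope of this lemma.
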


\begin{rmk}
	A finer asymptotic of the left side of \eqref{eq:Carmichael} can be found in \cite[Lemma 2, p. 694]{GMP2014}.
\end{rmk}

\subsection{Derivation of (\ref{eq:main}) via RF expansions}
By \eqref{eq:Lucht}, we have
\begin{equation}
	\sum_{ n\le X}
	\tau_k(n) \tau_\ell(n+h)
	=
	\sum_{n\le X}
	\sum_{q_1=1}^\infty
	\hat{\tau_k}(q_1)
	c_{q_1}(n)
	\sum_{q_2=1}^\infty
	\hat{\tau_\ell}(q_2)
	c_{q_2}(n+h).
\end{equation}
Bringing the $n$-sum inside, we get, by \eqref{eq:Carmichael},
\begin{equation} \label{eq:941}
	\sum_{ n\le X}
	\tau_k(n) \tau_\ell(n+h)
	=
	\sum_{q_1=1}^\infty
	\hat{\tau_k}(q_1)
	\sum_{q_2=1}^\infty
	\hat{\tau_\ell}(q_2)
	\sum_{n\le X}
	c_{q_1}(n)
	c_{q_2}(n+h)
	\sim
	X
	\sum_{q=1}^\infty
	\hat{\tau_k}(q)
	\hat{\tau_\ell}(q)
	c_q(h).
\end{equation}
Replacing $(-1)^k \log^{k-1}(q)$ by $\log^{k-1} X$, the product $\hat{\tau_k}(q)
\hat{\tau_\ell}(q)$ becomes, by \eqref{eq:rf},
\begin{equation} \label{eq:1006}
	\frac{(\log X)^{k+ \ell - 2}}{(k-1)! (\ell -1)!}
	\frac{1}{q^2}
	A_{k,\ell}(q),
\end{equation}
where
\begin{equation} \label{eq:718}
	A_{k,\ell}(q)
	=
	\prod_{p\mid q}
	\left(1-\frac{1}{p}\right)^{k + \ell -2}
	\sum_{ \alpha \ge \nu_p(q)}
	\binom{k + \alpha -2}{k-2}
	p^{\nu_p(q) - \alpha}
	\sum_{ \beta \ge \nu_p(q)}
	\binom{\ell + \beta -2}{\ell-2}
	p^{\nu_p(q) - \beta}.
\end{equation}
In particular, we have
\begin{equation} \label{eq:Aklp}
	A_{k,\ell}(p)
	=
	p^2 
	\left(
	1 - 
	\left(1- \frac{1}{p}\right)^{k-1}
	\right)
	\left(
	1 - 
	\left(1- \frac{1}{p}\right)^{\ell-1}
	\right).
\end{equation}
Thus, by \eqref{eq:1006}, \eqref{eq:941} becomes
\begin{equation} \label{eq:1125b}
	\sum_{ n\le X}
	\tau_k(n) \tau_\ell(n+h)
	\sim
	X
	\frac{(\log X)^{k+ \ell - 2}}{(k-1)! (\ell -1)!}
	B_{k,\ell}(h),
\end{equation}
where
\begin{equation} \label{eq:712}
	B_{k,\ell}(h)
	=
	\sum_{q=1}^\infty
	\frac{c_q(h)}{q^2}
	A_{k,\ell}(q).
\end{equation}
Going to Euler products, we have
\begin{equation} \label{eq:1049}
	B_{k,\ell}(h)
	=
	\prod_{p \nmid h}
	\left(
	1
	+
	\sum_{j=1}^\infty
	\frac{c_{p^j}(h)}{p^{2j}}
	A_{k,\ell}(p^j)
	\right)
	\prod_{p \mid h}
	\left(
	1
	+
	\sum_{j=1}^\infty
	\frac{c_{p^j}(h)}{p^{2j}}
	A_{k,\ell}(p^j)
	\right),
\end{equation}
say. 

If $p \nmid h$, then
\begin{equation}
	c_{p^j}(h)
	=
	\begin{cases}
		-1, & \text{if } j=1,\\
		0, & \text{if } j >1,
	\end{cases}
\end{equation}
and, thus, by \eqref{eq:Aklp},
\begin{align} \label{eq:1121a}
	\prod_{p \nmid h}
	&\left(
	1
	+
	\sum_{j=1}^\infty
	\frac{c_{p^j}(h)}{p^{2j}}
	A_{k,\ell}(p^j)
	\right)
	=
	\prod_{p \nmid h}
	\left(
	1
	-
	\frac{1}{p^2}
	A_{k,\ell}(p)
	\right)
	\\&
	=
	\prod_{p \nmid h}
	\left(
	1
	-
	\left(
	1 - 
	\left(1- \frac{1}{p}\right)^{k-1}
	\right)
	\left(
	1 - 
	\left(1- \frac{1}{p}\right)^{\ell-1}
	\right)
	\right).
\end{align}

On the other hand, if $p \mid h$, then
\begin{equation}
	c_{p^j}(h)
	=
	\begin{cases}
		p^j-p^{j-1},& \text{if } j \le \nu_p(h),\\
		-p^{\nu_p(h)},& \text{if } j=\nu_p(h)+1,\\
		0,& \text{if } j >\nu_p(h)+1.
	\end{cases}
\end{equation}
Thus, by the above,
\begin{align} \label{eq:1121b}
	&\prod_{p \mid h}
	\left(
	1
	+
	\sum_{j=1}^\infty
	c_{p^j}(h)
	\frac{A_{k,\ell}(p^j)}{p^{2j}}
	\right)
	= \prod_{p \mid h}
	\left(
	1
	+ \sum_{1\le j \le \nu_p(h)}
	(p^j - p^{j-1})
	\frac{A_{k,\ell}(p^j)}{p^{2j}}
	-
	p^{\nu_p(h)}
	\frac{A_{k,\ell}(p^{\nu_p(h)+1})}{p^{2(\nu_p(h)+1)}}
	\right).
\end{align}

Hence, by \eqref{eq:1121a} and \eqref{eq:1121b}, 
\begin{equation} \label{eq:1125}
	B_{k ,\ell}(h)
	= 
	C_{k, \ell}
	f_{k,\ell}(h).
\end{equation}
This completes the heuristic computation.

\begin{rmk}
	Equations \eqref{eq:712}, \eqref{eq:718}, and \eqref{eq:1125} provide a new arithmetic interpretation of the local factor: One has
	\begin{equation}
		C_{k, \ell}
		f_{k,\ell}(h)
		=
		\sum_{q=1}^\infty
		\frac{c_q(h)}{q^2}
		A_{k,\ell}(q),
	\end{equation}
	where $A_{k,\ell}(q)$ are certain normalized RF coefficients of $\tau_k(n)$.
\end{rmk}
\begin{rmk}
	The operations ``replacing $(-1)^k \log^{k-1}(q)$ by $\log^{k-1} X$" and interchanging order of summations of conditionally convergent series are obviously not rigorous, and could be even wrong. But we do them because they produce the right answer.
\end{rmk}

\subsection{Verifying (\ref{eq:main})}
We match our predictions \eqref{eq:C} and \eqref{eq:f} with the following forms from Ng and Thom \cite{NgThom2019}[equations (1.6) \& (1.27)] who had previously computed these quantities.

\begin{lemma}[\cite{NgThom2019} (1.6) p. 98]
	\begin{equation}\label{eq:NgThomC}
		C_{k, \ell}
		=
		\prod_p
		\left(
		\left(
		1 - \frac{1}{p}
		\right)^{k-1}
		+ 
		\left(
		1 - \frac{1}{p}
		\right)^{\ell-1}
		-
		\left(
		1 - \frac{1}{p}
		\right)^{k+\ell-2}
		\right).
	\end{equation}
\end{lemma}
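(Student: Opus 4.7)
The plan is to verify the lemma by matching Ng and Thom's expression \eqref{eq:NgThomC} with the author's form \eqref{eq:C} factor by factor in the Euler product. Fix a prime $p$ and introduce the shorthand
\[
a := \left(1 - \tfrac{1}{p}\right)^{k-1},
\qquad
b := \left(1 - \tfrac{1}{p}\right)^{\ell-1},
\]
so that $ab = (1-1/p)^{k+\ell-2}$ by addition of exponents. The Ng-Thom local factor at $p$ is then $a + b - ab$, while the local factor in \eqref{eq:C} is $1 - (1-a)(1-b)$.

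The identification is the one-line algebraic identity
\[
1 - (1-a)(1-b) = 1 - 1 + a + b - ab = a + b - ab,
\]
which shows that the two local factors coincide at every prime. Taking the product over all $p$ yields the equality of the global constants. To justify that both products define the same real number, I would note that $a = 1 - (k-1)/p + O(1/p^2)$ and $b = 1 - (\ell-1)/p + O(1/p^2)$ as $p \to \infty$, so $a + b - ab = 1 - (k-1)(\ell-1)/p^2 + O(1/p^3)$; each local factor is therefore $1 + O(1/p^2)$ and both Euler products converge absolutely.

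There is no genuine obstacle here: the content of the lemma is a binomial-style expansion dressed up in Euler-product notation, and the only point requiring care is checking absolute convergence before identifying the two products term by term. The more substantive companion task, also announced in item (2) of the introduction, is the analogous comparison for the local factor $f_{k,\ell}(h)$ in \eqref{eq:f}: there the derivation through \eqref{eq:1049}--\eqref{eq:1125} must be matched against Ng-Thom's explicit form, and that matching is longer because it tracks the $p \mid h$ and $p \nmid h$ cases separately through the values of the Ramanujan sums $c_{p^j}(h)$.
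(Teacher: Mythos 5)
Your verification is correct and is essentially the paper's own argument: the proof of Theorem~\ref{thm:sole} simply declares the equality of \eqref{eq:C} and \eqref{eq:NgThomC} to be ``clear,'' and your one-line identity $1-(1-a)(1-b)=a+b-ab$ with $a=(1-\frac1p)^{k-1}$, $b=(1-\frac1p)^{\ell-1}$ is exactly what that remark amounts to. The added note on absolute convergence of the Euler product (each local factor being $1+O(1/p^2)$) is a harmless and correct refinement that the paper omits.
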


For the local factor $f_{k, \ell}$, Ng and Thom gave five equivalent expressions \cite{NgThom2019}[c.f. (1.7), (1.8), (1.27), (1.28), (4.6)] for this quantity. We chose the following expression to make the verification seemingly child's play.
\begin{lemma}[\cite{NgThom2019} (1.27) p. 102]
	\begin{equation} \label{eq:NgThomf}
		f_{k, \ell}(p^\alpha)
		=
		\dfrac{\sum_{j=0}^{\alpha} \left( \frac{\sigma_{k-1}(p^j, 1) \sigma_{\ell - 1}(p^j, 1)}{p^j} - \frac{\sigma_{k-1}(p^{j+1}, 1) \sigma_{\ell - 1}(p^{j+1}, 1)}{p^{j+2}} \right) }{ \left( 1 - \frac{1}{p} \right)^{k-1} + \left( 1 - \frac{1}{p} \right)^{\ell-1} - \left( 1 - \frac{1}{p} \right)^{k+\ell-2}},
	\end{equation}
	where
	\begin{equation} \label{eq:sigmak}
		\sigma_{k}(p^{j}, s)
		=
		\dfrac{\sum_{i=0}^{\infty} \frac{\tau_{k}(p^{j+i})}{p^{is}}}{ \sum_{i=0}^{\infty} \frac{\tau_{k}(p^{i})}{p^{is}} }
		=
		\left( 1 - \frac{1}{p} \right)^k
		\sum_{i=0}^{\infty} \frac{\tau_{k}(p^{j+i})}{p^{is}}.
	\end{equation}
\end{lemma}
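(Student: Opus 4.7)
The plan is to verify the matching promised in point (2) of the introduction, namely that \eqref{eq:C} agrees with \eqref{eq:NgThomC} and that each prime-$p$ Euler factor of \eqref{eq:f} with $\alpha=\nu_p(h)$ agrees with \eqref{eq:NgThomf}. I would treat both checks prime by prime and reduce each to an algebraic identity.

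For $C_{k,\ell}$ the matching is a one-line identity. With $a=(1-1/p)^{k-1}$ and $b=(1-1/p)^{\ell-1}$, the prime-$p$ Euler factor of \eqref{eq:C} is $1-(1-a)(1-b)=a+b-ab$, which is exactly the Euler factor in \eqref{eq:NgThomC}. There is nothing more to say.

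For $f_{k,\ell}$ the denominators then already agree, so only the numerators need be compared. My plan here hinges on the compact factorisation
\[A_{k,\ell}(p^j)\;=\;\sigma_{k-1}(p^j,1)\,\sigma_{\ell-1}(p^j,1),\]
which I would derive directly from \eqref{eq:sigmak}: using $\tau_{k-1}(p^m)=\binom{k+m-2}{k-2}$ and the substitution $m=j+i$, one factor of \eqref{eq:sigmak} becomes $(1-1/p)^{k-1}\sum_{\alpha\ge j}\binom{k+\alpha-2}{k-2}p^{j-\alpha}$, which is exactly one of the two sums in \eqref{eq:718}; the symmetric identification in $k\leftrightarrow\ell$ handles the other factor, and multiplying them reproduces \eqref{eq:718}. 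As a byproduct, the geometric-series identity $\sum_{m\ge 0}\binom{k+m-2}{k-2}x^m=(1-x)^{-(k-1)}$ gives $\sigma_{k-1}(1,1)=1$, which I will need to absorb the stray $1$ in \eqref{eq:f}.

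Substituting this factorisation into the numerator of \eqref{eq:f} and splitting $p^j-p^{j-1}$ yields two sums. A single re-index $j\mapsto j+1$ on the piece carrying $p^{j-1}$ combines it with the boundary term $-p^{\nu_p(h)}A_{k,\ell}(p^{\nu_p(h)+1})/p^{2(\nu_p(h)+1)}$ to produce the off-diagonal sum $\sum_{j=0}^{\nu_p(h)}\sigma_{k-1}(p^{j+1},1)\sigma_{\ell-1}(p^{j+1},1)/p^{j+2}$ (with a minus sign), while $\sigma_{k-1}(1,1)\sigma_{\ell-1}(1,1)=1$ recasts the leading $1$ as the $j=0$ diagonal term. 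What remains is precisely the summand in the numerator of \eqref{eq:NgThomf}. There is no real obstacle: once the factorisation of $A_{k,\ell}(p^j)$ is spotted, the rest is brief bookkeeping, and the author's description of the verification as ``seemingly child's play'' is accurate; the only mild care is in checking that the boundary term at $j=\nu_p(h)+1$ combines cleanly with the shifted sum.
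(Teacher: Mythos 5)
Your proposal is correct and follows essentially the same route as the paper's own verification: the trivial identity $1-(1-a)(1-b)=a+b-ab$ for $C_{k,\ell}$, the key factorisation $A_{k,\ell}(p^j)=\sigma_{k-1}(p^j,1)\,\sigma_{\ell-1}(p^j,1)$ derived from $\tau_{k-1}(p^m)=\binom{k+m-2}{k-2}$, and the same re-indexing that merges the $p^{j-1}$ piece with the boundary term. Nothing is missing.
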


We now come to the sole theorem of this note.
\begin{thm}\label{thm:sole}
	The quantity $C_{k,\ell}$ in \eqref{eq:C} matches with \eqref{eq:NgThomC}, and the same is true for $f_{k, \ell}$ from \eqref{eq:f} and \eqref{eq:NgThomf} .
\end{thm}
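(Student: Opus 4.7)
The plan is to reduce both claims to short algebraic identities at each prime $p$, so that the full equalities follow upon taking Euler products.

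For $C_{k,\ell}$, write the local factor in \eqref{eq:C} at a prime $p$ as $1-(1-a)(1-b)$ with $a=(1-1/p)^{k-1}$ and $b=(1-1/p)^{\ell-1}$. Expanding gives $a+b-ab$, and since $ab=(1-1/p)^{k+\ell-2}$, this is exactly the $p$-th factor in \eqref{eq:NgThomC}. Multiplying over all primes proves the first assertion.

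For $f_{k,\ell}$, by multiplicativity it suffices to show that the local factor at $h=p^{\alpha}$ from \eqref{eq:f} agrees with \eqref{eq:NgThomf}. The denominators already match by the $C_{k,\ell}$ computation, so the real work is to match numerators. The key ingredient is the identification
\begin{equation} \label{eq:key-id}
A_{k,\ell}(p^j) = \sigma_{k-1}(p^j,1)\,\sigma_{\ell-1}(p^j,1),
\end{equation}
which I would verify by substituting $\tau_{k-1}(p^{j+i}) = \binom{k+j+i-2}{k-2}$ into the right-hand form of \eqref{eq:sigmak}, reindexing $\alpha = j+i$ so that the summation range becomes $\alpha\ge j$, and matching factor-for-factor with the inner sums in \eqref{eq:Akl}. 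Evaluating the resulting series at $j=0$ in closed form gives the useful special case $A_{k,\ell}(1)=1$.

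Given \eqref{eq:key-id}, I would telescope the Ng--Thom numerator: splitting the sum in \eqref{eq:NgThomf} and shifting the index of the subtracted sum up by one yields
\begin{equation}
\sum_{j=0}^{\alpha}\frac{A_{k,\ell}(p^j)}{p^j} - \sum_{j=0}^{\alpha}\frac{A_{k,\ell}(p^{j+1})}{p^{j+2}} = A_{k,\ell}(1) + \sum_{j=1}^{\alpha} A_{k,\ell}(p^j)\left(\frac{1}{p^j} - \frac{1}{p^{j+1}}\right) - \frac{A_{k,\ell}(p^{\alpha+1})}{p^{\alpha+2}}.
\end{equation}
Since $A_{k,\ell}(1)=1$ and $(p^j-p^{j-1})/p^{2j} = 1/p^j - 1/p^{j+1}$, the right-hand side matches the numerator of \eqref{eq:f} at $h=p^\alpha$ term-by-term, which finishes the argument. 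The only step requiring care is \eqref{eq:key-id}, but that is pure binomial bookkeeping once \eqref{eq:sigmak} is written out; everything else reduces to the identity $1-(1-a)(1-b)=a+b-ab$ and a telescoping sum, so I expect no real obstacle.
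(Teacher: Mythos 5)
Your proposal is correct and follows essentially the same route as the paper: reduce to prime powers, observe the denominators coincide, prove the key identity $A_{k,\ell}(p^j)=\sigma_{k-1}(p^j,1)\sigma_{\ell-1}(p^j,1)$ via $\tau_{k-1}(p^{j+i})=\binom{k+j+i-2}{k-2}$ and a reindexing, then rearrange/telescope the Ng--Thom numerator (the paper splits off the $j=0$ and $j=\alpha$ boundary terms rather than phrasing it as an index shift, but the computation is identical). No gaps.
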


\begin{proof}
It is clear that $C_{k,\ell}$ in \eqref{eq:C} is exactly the same as \eqref{eq:NgThomC}. The function $f_{k, \ell}(h)$ in \eqref{eq:f} is multiplicative in $h$, so it suffices to check for prime powers $h=p^\alpha$, where $\alpha = \nu(h)$. The factors appearing in the denominators of both \eqref{eq:f} and \eqref{eq:NgThomf} are identical. This observation motivated the choice \eqref{eq:NgThomf}. The factors $A_{k, \ell}$ and $\sigma_k$ in the numerators are related via the following identity.
\begin{lemma}
	We have
	\begin{equation} \label{eq:756}
		A_{k, \ell}(p^j)
		=
		\sigma_{k-1}(p^j, 1)
		\sigma_{\ell-1}(p^j, 1),
	\end{equation}
	where $A_{k, \ell}(p^j)$ and $\sigma_{k-1}(p^j, 1)$ are given in \eqref{eq:Akl} and \eqref{eq:sigmak}, respectively.
\end{lemma}
\begin{proof}
	This identity follows from a simple change of variables and the identity
	\begin{equation}
		\tau_{k}(p^j)
		= 
		\binom{k + j - 1}{k-1}.
	\end{equation}
	As such,
	\begin{equation}
		\sum_{ \alpha \ge j}
		\binom{k + \alpha -2}{k-2}
		p^{j - \alpha}
		=
		\sum_{i=0}^\infty
		\dfrac{\tau_{k-1}(p^{j+i})}{p^i}.
	\end{equation}
	Thus,
	\begin{equation}
		A_{k, \ell}(p^j)
		= 
		\left( 1 - \frac{1}{p} \right)^{k-1}
		\sum_{i=0}^{\infty} \frac{\tau_{k-1}(p^{j+i})}{p^{is}}
		\left( 1 - \frac{1}{p} \right)^{\ell-1}
		\sum_{i=0}^{\infty} \frac{\tau_{\ell-1}(p^{j+i})}{p^{is}}
		=
		\sigma_{k-1}(p^j, 1)
		\sigma_{\ell-1}(p^j, 1).
	\end{equation}
\end{proof}
Next, the $j=0$ term of the first term in the numerator of \eqref{eq:NgThomf} is equal to
\begin{equation}
	\frac{\sigma_{k-1}(1, 1) \sigma_{\ell - 1}(1, 1)}{1}
	= 1.
\end{equation}
The sum from $j=1$ to $\alpha$ of the first term in the numerator of \eqref{eq:NgThomf} is equal to, by \eqref{eq:756},
\begin{equation}
	\sum_{j=1}^{\alpha} 
	\frac{\sigma_{k-1}(p^j, 1) \sigma_{\ell - 1}(p^j, 1)}{p^j}
	=
	\displaystyle\sum_{1\le j \le \nu_p(h)}
	p^j
	\frac{A_{k,\ell}(p^j)}{p^{2j}}.
\end{equation}

The $j=\alpha$ term of the second term in the numerator of \eqref{eq:NgThomf} is equal to
\begin{equation}
	- \frac{\sigma_{k-1}(p^{\alpha+1}, 1) \sigma_{\ell - 1}(p^{\alpha+1}, 1)}{p^{\alpha+2}}
	=
	- \frac{A_{k, \ell}(p^{\alpha +1})}{p^{\alpha + 2}}
	=
	-
	p^{\nu_p(h)}
	\frac{A_{k,\ell}(p^{\nu_p(h)+1})}{p^{2(\nu_p(h)+1)}},
\end{equation}
by \eqref{eq:756} and $\alpha = \nu_p(h)$. The sum from $j=0$ to $\alpha - 1$ of the second term in the numerator of \eqref{eq:NgThomf} is equal to
\begin{equation}
	- \sum_{j=0}^{\alpha - 1} \frac{\sigma_{k-1}(p^{j+1}, 1) \sigma_{\ell - 1}(p^{j+1}, 1)}{p^{j+2}}
	=
	- \displaystyle\sum_{1\le j \le \nu_p(h)}
	p^{j-1}
	\frac{A_{k,\ell}(p^j)}{p^{2j}},
\end{equation}
after another change of variables and application of \eqref{eq:756}. Thus, by the above four identities,
\begin{align}
	&\sum_{j=0}^{\alpha} \left( \frac{\sigma_{k-1}(p^j, 1) \sigma_{\ell - 1}(p^j, 1)}{p^j} - \frac{\sigma_{k-1}(p^{j+1}, 1) \sigma_{\ell - 1}(p^{j+1}, 1)}{p^{j+2}} \right)
	\\& \quad =
	1
	+ \displaystyle\sum_{1\le j \le \nu_p(h)}
	(p^j - p^{j-1})
	\frac{A_{k,\ell}(p^j)}{p^{2j}}
	-
	p^{\nu_p(h)}
	\frac{A_{k,\ell}(p^{\nu_p(h)+1})}{p^{2(\nu_p(h)+1)}}.
\end{align}
Thus, $f_{k, \ell}(p^{\nu_p(h)})$ with $f_{k, \ell}$ in \eqref{eq:f} is equal $f_{k, \ell}(p^\alpha)$ in \eqref{eq:NgThomf}.
\end{proof}

\subsection{Limitations and extensions}
It seems that the theory of RF expansions is unable to ``see" the powers of $\log$'s in \eqref{eq:main}, and so these factors need to be put in by hand. This is somewhat, though not entirely, analogous to the situation where random matrix theory can predict parts of the leading order main terms in moments and ratios of $L$-functions, and the remaining factors need to be inserted separately.

For an extension of the method, it would be very interesting to compute/predict the leading order main term of the triple convolution
\begin{equation}
	\sum_{n \le X}
	\tau_{k_1}(n)
	\tau_{k_2}(n + h_1)
	\tau_{k_3}(n + h_2).
\end{equation}

\bibliographystyle{plain}

\end{document}